\documentclass[12pt, reqno]{amsart}

\usepackage[margin=1.2in]{geometry}

\usepackage{amssymb,upref}
\usepackage{amsmath, amsthm}
\usepackage{enumerate, graphicx, float}
\usepackage{relsize, mathrsfs, upgreek}

\usepackage{color}
\usepackage{esint}

\usepackage{amsmath,amssymb,amscd,amsxtra,upref}
\usepackage{latexsym}

\theoremstyle{plain}

\newtheorem{theor}{Theorem}
\newtheorem{lemma}[theor]{Lemma}

\newtheorem{coro}[theor]{Corollary}

\newtheorem{remark}[theor]{Remark}
\newtheorem{dfn}[theor]{Definition}

\newtheorem{thm}{Theorem} 

\def\XXint#1#2#3{{\setbox0=\hbox{$#1{#2#3}{\int}$ }
\vcenter{\hbox{$#2#3$ }}\kern-.6\wd0}}

\newcommand{\bee}{\begin{equation}}
\newcommand{\eee}{\end{equation}}
\newcommand{\be}{\begin{equation*}}
\newcommand{\ee}{\end{equation*}}

\newcommand{\eps}{\varepsilon}
\newcommand{\f}{\varphi}

\newcommand{\dcc}{\rm(DC)}

\newcommand{\na}{\mathbb{N}}

\newcommand{\rn}{\mathbb{R}^n}

\newcommand{\normn}[1]{\left\|#1\right\|}

\newcommand{\diam}[1]{{\rm diam}(#1)}

\newcommand{\tin}{T^{-1}}

\newcommand{\N}{\mathcal{N}}

\newcommand{\calo}{\mathcal{O}}

\newcommand{\muf}{\mu_\f}

\newcommand{\fs}{\phi^*}

\newcommand{\dx}{\, dx}
\newcommand{\dy}{\, dy}

\DeclareMathOperator*{\essinf}{ess\,inf\,}

\begin{document}


\subjclass[2010]{Primary 35J96, 30L99; Secondary 35J70, 35J75}

\keywords{Monge-Amp\`ere equation, reverse-H\"older inequalities, Lusin condition.}

\thanks{Partially supported by Simons Foundation grant MPS-TSM-00007229}

\address{Diego Maldonado, Kansas State University, Department of Mathematics. 138 Cardwell Hall, Manhattan, KS-66506, USA.} \email{dmaldona@ksu.edu}

\title[]{On a higher-integrability estimate for Laplacians of convex solutions to the Monge-Amp\`ere equation}
\author[Diego Maldonado]{Diego Maldonado}

\thanks{}

\date{\today}

\begin{abstract} A maximum principle for certain strictly convex functions belonging to $W^{2,1}_{\rm loc}(\Omega)\cap C^1(\Omega)$ is established and then applied to obtain a $W^{2,1 +\eps}_{\rm loc}(\Omega)$-estimate.
\end{abstract}

\maketitle

\section{Introduction and main results}

Given an open convex set $\Omega \subset \rn$ and constants $0 < \lambda \leq \Lambda$, the question on establishing  regularity properties for a strictly convex function $\f \in C^1(\Omega)$ satisfying
$\lambda \leq \det D^2 \f \leq \Lambda$ in $\Omega$ in the Aleksandrov sense, that is,
\begin{equation}\label{intro:muf:like:lambdas}
\lambda |F| \leq |\nabla \f(F)| \leq \Lambda |F|\quad \text{for every Borel set } F \subset \Omega,
\end{equation}
lies at the core of the regularity theory for the Monge-Amp\`ere equation. In this respect, we mention L. Caffarelli's groundbreaking $C^{1,\alpha}_{\rm loc}(\Omega)$-estimate where $\alpha=\alpha(\lambda, \Lambda, n) \in (0,1)$ (see \cite{Caffa92, MaPAMS23}) as well as the more recent $W^{2,1+\eps}_{\rm loc}(\Omega)$-estimates due to G. De Philippis, A. Figalli, and O. Savin in \cite{DeFiSa12} and, independently, to T. Schmidt in \cite{Sch}. More precisely, it is proved in \cite[Theorem 1]{DeFiSa12} and \cite[Theorem 1.1]{Sch}  that if $\f \in C^1(\Omega)$ satisfies \eqref{intro:muf:like:lambdas}, then there exist constants $C_0 \geq 1$ and $\eps_0, \kappa \in (0,1)$, depending only on $\lambda, \Lambda,$ and $n$, such that the (weak) reverse-H\"older inequality
\begin{equation}\label{rhphiDeFi}
\left(\fint_{S_\f(x_0, \kappa t)}  \Delta \f(x)^{1+\eps_0} \dx \right)^\frac{1}{1+\eps_0} \leq C_0 \fint_{S_\f(x_0, t)}  \Delta \f(x) \dx
\end{equation}
holds true for every Monge-Amp\`ere section $S_\f(x_0,t) \subset \subset \Omega$ (that is, $\overline{S_\f(x_0,t)} \subset \Omega$). Here, for $x_0 \in \Omega$ and $t > 0$, $S_\f(x_0,t)$ denotes the \emph{Monge-Amp\`ere section} of $\f$ centered at $x_0$ with height $t >0$, defined as the open, bounded, convex set
$$
S_\f(x_0,t) := \{x \in \Omega: \delta_\f(x_0, x) < t \}
$$
where
\begin{equation}\label{def:delta:f}
\delta_\f(x_0, x):= \f(x) - \f(x_0) - \langle \nabla \f(x_0), x - x_0 \rangle \quad \forall x, x_0 \in \Omega.
\end{equation}
In order to prove \eqref{rhphiDeFi}, in \cite[Section 3]{DeFiSa12} and \cite[Section 4]{Sch} the authors first assume that the strictly convex solution to \eqref{intro:muf:like:lambdas} satisfies $\f \in C^2(\Omega)$, then an approximation argument, as in  \cite[Section 5]{Sch}, yields the result for a general $\f \in C^1(\Omega)$.

Now, for a strictly convex function $\f \in C^1(\Omega)\cap W^{2,1}_{\rm loc}(\Omega)$ let us weaken the condition \eqref{intro:muf:like:lambdas} by assuming, instead, the existence of constants $C_\theta >0$ and $\theta \in (0,1)$ such that 
\begin{equation}\label{intro:CF:MA}
\frac{|\nabla \f(F)|}{|\nabla \f(S)|} \leq C_\theta \left(\frac{|F|}{|S|} \right)^{\theta}
\end{equation}
for every Monge-Amp\`ere section $S:=S_\f(x,t) \subset \subset \Omega$ and every Borel set $F \subset S$. Notice that \eqref{intro:muf:like:lambdas} quickly implies \eqref{intro:CF:MA} with $C_\theta:=\Lambda/\lambda$ and $\theta :=1$. Also, \eqref{intro:CF:MA} corresponds to the well-known Coifman-Fefferman condition characterizing Muckenhoupt's  $A_\infty(\rn)$ weights (see for instance \cite[Section 9.3]{GrafakosBook}), but adapted to the Monge-Amp\`ere sections instead of Euclidean balls. In their foundational work on the linearized Monge-Amp\`ere equation, L. Caffarelli  and C. Guti\'errez   \cite{caffaguti1, caffaguti2} introduced and made extensive use of \eqref{intro:CF:MA}, which they denoted as $\muf \in (\mu_\infty)$ and $\muf$ stands for the \emph{Monge-Amp\`ere measure associated to $\f$} defined as $\muf(F) :=|\nabla \f(F)|$ for any Borel set $F \subset \Omega$. 

Back to \eqref{rhphiDeFi}, as pointed out in \cite[Section 3.3]{DeFiSa12} and in \cite[Section 4]{MaCVPDE14}, the reverse-H\"older inequality \eqref{rhphiDeFi} can still be obtained under the condition \eqref{intro:CF:MA} provided that $\f \in C^2(\Omega)$ (or even $\f \in W^{2,n}_{\rm loc}(\Omega)$, as required by the Aleksandrov-Bakelman-Pucci, from now on ABP,  maximum principle). However, an approximation argument, to pass from $\f \in C^2(\Omega)$ to $\f \in W^{2,1}_{\rm loc}(\Omega)\cap C^1(\Omega)$, that quantitatively preserves the condition \eqref{intro:CF:MA} seems to be currently missing from the literature.  

The purpose of this note is to provide a way towards the $W^{2,1+\eps}_{\rm loc}(\Omega)$-estimate \eqref{rhphiDeFi} that circumvents the ABP maximum principle as well as the hypotheses $\f \in W^{2,n}_{\rm loc}(\Omega)$ or $\f \in C^2(\Omega)$. Indeed, we will only assume that $\f \in W^{2,1}_{\rm loc}(\Omega)\cap C^1(\Omega)$ satisfies \eqref{intro:CF:MA} and resort to the fact that, due to \eqref{intro:CF:MA}, $\nabla \f$ preserves sets of measure zero (that is, $\nabla \f$ satisfies Lusin's $(N)$ condition, see Section \ref{sec:Lusin}). In particular, approximating arguments are rendered unnecessary. 

More precisely, inspired by Lemmas 3.2 and 3.3 in \cite{DeFi11} and Lemma 3.1 in \cite{MaCVPDE14}, we prove the following version of the ABP maximum principle without the usual $W^{2,n}_{\rm loc}$-requirement as in \cite[Section 9.1 and Exercise 9.3]{gt} and \cite[Chapter 6, pp.79-87]{ChenWu}.

\begin{lemma}\label{lemma:abp} Fix an open bounded convex set $\mathcal{O} \subset \rn$ such that
$
B(0, 1) \subset \mathcal{O} \subset B(0, n)
$
and a strictly convex function $\phi \in C(\overline{\mathcal{O}}) \cap C^1(\mathcal{O}) \cap W^{2,1}(\mathcal{O})$ such that 
\begin{enumerate}[(i)]
\item $\phi$ vanishes on $\partial \mathcal{O}$,
\item\label{item:def:c} $\phi(y^*) \leq -c$ for some $y^* \in \mathcal{O}$ and $c >0$, and
\item\label{item:nabla:phi:N} $\nabla \phi : \mathcal{O} \to \rn$ satisfies Lusin's $(N)$ condition on $\mathcal{O}$, that is, $|\nabla \phi(E)| =0$ for every $E\subset \calo$ with $|E|=0$.
\end{enumerate}
Then,
\begin{equation}\label{mu:phi:D2phi>c}
\omega_n \left(\frac{c}{4n}\right)^n \leq \mu_\phi \left(\left\{y \in \mathcal{O} : D^2 \phi(y) \geq \frac{c}{n^2} I  \right\}\right),
\end{equation}
where $\omega_n$ denotes the volume of the $n$-dimensional unit ball in $\rn$. 
\end{lemma}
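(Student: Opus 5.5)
We follow the classical ABP argument, adding a tilted-plane (lower contact set) device so that Hessian information enters only through Aleksandrov-type differentiability, plus Lusin's $(N)$ condition. Since $\phi\in W^{2,1}(\calo)\cap C^1(\calo)$ is convex, $\nabla\phi\in W^{1,1}_{\rm loc}$, and by Aleksandrov's theorem (or Calder\'on--Zygmund differentiability of $W^{1,1}$ maps) there is a null set $Z\subset\calo$ off which $\phi$ is twice differentiable with Hessian equal to the weak $D^2\phi(y)$, and $\nabla\phi$ is differentiable with derivative $D^2\phi(y)$. By (iii), $|\nabla\phi(Z)|=0$, so $Z$ can be discarded on the image side.

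\emph{Step 1: a large set of slopes.} For $p\in\rn$ with $|p|<c/(4n)$ we consider the perturbed function $\psi_p(x):=\phi(x)-\frac{c}{2n^2}|x|^2\cdot\frac{1}{2}-\langle p,x\rangle$. The quadratic term is chosen so that on $\overline{\calo}\subset \overline{B(0,n)}$ it has size at most $c/4$, and the linear term has size at most $|p|\,n<c/4$. Hence $\psi_p\geq -c/2$ on $\partial\calo$, while $\psi_p(y^*)\le -c+c/4+c/4=-c/2$; the normalisation will be tuned slightly, for instance by using strict inequalities or shrinking constants, so that the minimum of $\psi_p$ over $\overline{\calo}$ is attained at an interior point $y_p$. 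There $\nabla\phi(y_p)=p+\frac{c}{2n^2}y_p$ and $\psi_p$ has a global minimum, so $\phi$ lies above the paraboloid $P(x)=\phi(y_p)+\langle \nabla\phi(y_p),x-y_p\rangle+\frac{c}{4n^2}|x-y_p|^2$ on $\calo$. If, instead, I keep the quadratic with coefficient $\frac{c}{2n^2}$, the touching paraboloid gives $D^2\phi(y_p)\ge \frac{c}{n^2}I$ wherever second derivatives exist. That is the target inequality, and the constants $c/(4n)$ and $c/n^2$ in \eqref{mu:phi:D2phi>c} suggest exactly this choice, with $|x|^2\le n^2$ and the quadratic contributing at most $c/2$. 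Accordingly, I will take $\psi_p=\phi-\frac{c}{2n^2}|x|^2-\langle p,x\rangle$ and verify the interior-minimum claim using $\phi=0$ on $\partial\calo$, $|x|<n$ and $|p|<c/(4n)$; the boundary values are then at least $-c/2-c/4$ and the value at $y^*$ is at most $-c+c/4$ (since $|y^*|^2\ge 0$). This is where the constants are checked.

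\emph{Step 2: translating to the Monge--Amp\`ere measure.} Let $K$ be the contact set $\{y_p: |p|<c/(4n)\}$. At every $y\in K\setminus Z$ we have $D^2\phi(y)\ge \frac{c}{n^2}I$, because $\phi-\frac{c}{2n^2}|x|^2$ has a supporting hyperplane at $y$ and is twice differentiable there. The map $y\mapsto \nabla\phi(y)-\frac{c}{n^2}y$ sends $K$ onto a set containing $B(0,c/(4n))$, so $|B(0,c/4n)|=\omega_n(c/4n)^n$ is at most the measure of the image of $K$ under $T:=\nabla\phi-\frac{c}{n^2}\mathrm{Id}$.

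\emph{Step 3: comparing images.} Here lies the main obstacle. We must pass from $|T(K)|$ to $|\nabla\phi(K)|=\mu_\phi(K)$ without a $W^{2,n}$ area formula. I would first try the monotonicity: for $y_1,y_2\in K$, the contact property gives $\langle T(y_1)-T(y_2),y_1-y_2\rangle\ge0$. Moreover, $\nabla\phi$ restricted to $K$ is the gradient of a convex function with the uniform convexity $\langle\nabla\phi(y_1)-\nabla\phi(y_2),y_1-y_2\rangle\ge \frac{c}{n^2}|y_1-y_2|^2$ on $K$, obtained by adding the two supporting-paraboloid inequalities. Writing $\nabla\phi=T+\frac{c}{n^2}\mathrm{Id}$, the inverse of $\nabla\phi$ on $\nabla\phi(K)$ is Lipschitz, and $T\circ(\nabla\phi)^{-1}=\mathrm{Id}-\frac{c}{n^2}(\nabla\phi)^{-1}$ is a $1$-Lipschitz map: it is the resolvent-type map of a monotone operator, and one checks $|u-v-\frac{c}{n^2}(a-b)|\le |u-v|$ from the two monotonicity inequalities. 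Hence $|T(K)|\le|\nabla\phi(K)|$, with no differentiability needed. Combining this with $\nabla\phi(K\cap Z)$ being null by (iii) gives $\omega_n(c/4n)^n\le\mu_\phi(K\setminus Z)\le\mu_\phi(\{D^2\phi\ge \frac{c}{n^2}I\})$.
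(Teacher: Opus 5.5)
Your argument is correct. Step 1 is the sliding-slope proof of the ABP covering inequality, specialized to the paper's $h=q-\phi$: your $\nabla\phi-\frac{c}{n^2}\mathrm{Id}$ is $-\nabla h$, your $K$ lies in $\Gamma_h(\calo)$, and your radius $c/(4n)$ is the paper's lower bound $h(y^*)/\diam{\calo}\ge (c/2)/(2n)$. Step 3, however, takes a genuinely different route from the paper.

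\emph{The paper's route.} It proves Lusin's $(N)$ condition for $\nabla h$ on $\Gamma_h(\calo)$ by a ball-covering argument. That argument rests on the triangle-inequality bound $|\nabla h(z)-\nabla h(z')|\le 3|\nabla\phi(z)-\nabla\phi(z')|$. The paper then uses the Sobolev area formula (Theorem~\ref{thm:Lusin}) to get $|\nabla h(\Gamma_h)|\le\int_{\Gamma_h}|\det D^2h|$. Next it uses $D^2h\le 0$ a.e.\ on $\Gamma_h$ to compare $|\det D^2h|\le\det D^2\phi$ pointwise. Finally it applies the area formula a second time to turn $\int_{\Gamma_h}\det D^2\phi$ into $\mu_\phi(\Gamma_h)$.

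\emph{Your route.} You use the inner-product form of the contact inequality, $\langle \nabla\phi(y_1)-\nabla\phi(y_2),y_1-y_2\rangle\ge \frac{c}{n^2}|y_1-y_2|^2$. This gives the sharp $1$-Lipschitz bound, so $|\nabla h(K)|\le|\nabla\phi(K)|=\mu_\phi(K)$ follows from $\mathcal{H}^n(G(A))\le \mathrm{Lip}(G)^n\mathcal{H}^n(A)$ alone. You need no area formula, no Lusin property for $\nabla h$, and no determinant comparison. Hypothesis (iii) enters only to discard the null set where Aleksandrov's Hessian fails, and $W^{2,1}$ only identifies that Hessian with the weak one.

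\emph{What each buys.} The paper's route yields the classical integral ABP bound $|\nabla h(\Gamma_h)|\le\int_{\Gamma_h}|\det D^2h|$ in the $W^{2,1}$ setting. Yours is more elementary. Feeding the paper's $3$-Lipschitz bound into your measure comparison would cost a factor $3^n$; the sharpness of your inequality is exactly what preserves the constant $\omega_n(c/4n)^n$.

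A few points to tidy:
\begin{enumerate}
\item Step 1 as written first uses the wrong quadratic coefficient, which would only give $D^2\phi\ge\frac{c}{2n^2}I$. Keep only the corrected $\psi_p$.
\item $\psi_p$ is not convex, so its minimizer need not be unique. Define $K$ as the relatively closed contact set $\{y:\delta_\phi(y,x)\ge\frac{c}{2n^2}|x-y|^2\ \forall x\in\calo\}$ intersected with $\{|\nabla\phi(y)-\frac{c}{n^2}y|<\frac{c}{4n}\}$. This is a Borel set, unlike a selection $\{y_p\}$.
\item The constant $1$ in $|G(A)|\le|A|$ uses $\mathcal{H}^n=\mathcal{L}^n$ (the isodiametric inequality). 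A naive ball covering loses a dimensional factor.
\item Your parenthetical that $W^{1,1}$ maps are classically differentiable a.e.\ is false for $n\ge 2$. You only use Aleksandrov's second-order expansion of the convex function $\phi$, so nothing depends on it.
\end{enumerate}
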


\begin{remark} With Lemma \ref{lemma:abp} in hand, the next result replaces Lemmas 3.2 and 3.3 in \cite{DeFi11} and then implies Lemma 3.4 from \cite{DeFi11} under the assumptions $\f \in W^{2,1}_{\rm loc}(\Omega)\cap C^1(\Omega)$ and \eqref{intro:CF:MA}  only. Then, the reverse-H\"older inequality \eqref{rhphiDeFi} follows from \cite[Lemma 3.4]{DeFi11} as in \cite[Section 3.2]{DeFiSa12} or as in \cite[Section 4]{Sch}. 
\end{remark}

\begin{lemma}\label{lemma:main} Fix an open convex set $\Omega \subset \rn$, a strictly convex function  $\f \in W^{2,1}_{\rm loc}(\Omega)\cap C^1(\Omega)$ satisfying \eqref{intro:CF:MA}. Then, there are constants $0 < \kappa_2  < 1 < C_2 < \infty$ (depending only on $C_\theta$, $\theta$, and $n$) such that for every section $S:=S_\f(x_0, t) \subset \subset \Omega$ there exists a set $E \subset S$ satisfying
\begin{equation}\label{muES}
\kappa_2 \,  \muf(S) \leq \muf(E) 
\end{equation}
as well as
\begin{equation}\label{d2ecc}
\frac{1}{|S|} \int_S \| D^2 \f(x) \| \, dx \leq C_2 \essinf_E \| D^2 \f\|.
\end{equation}
\end{lemma}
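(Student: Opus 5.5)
Normalize the section first, then apply Lemma \ref{lemma:abp} to a suitable auxiliary function, and finally rescale back to $S$.

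\emph{Normalization.} By John's lemma there is an affine map $T x = Ax + b$ with $B(0,1)\subset T(S)\subset B(0,n)$. Set $\calo := T(S)$ and
\[
\phi(y) := \frac{1}{t}\Big(\delta_\f(x_0, T^{-1}y) - t\Big), \qquad y\in \overline{\calo}.
\]
This function inherits the properties needed in Lemma \ref{lemma:abp}.
\begin{itemize}
\item[(a)] It is strictly convex, belongs to $C(\overline\calo)\cap C^1(\calo)\cap W^{2,1}(\calo)$, and vanishes on $\partial\calo$. The $W^{2,1}$ membership holds because $\overline S\subset\Omega$ is compact.
\item[(b)] Its minimum is $\phi(Tx_0)=-1$, so hypothesis (ii) holds with $c=1$.
\item[(c)] Its gradient is $\nabla\phi(y)=\frac1t (A^{-1})^{T}\big(\nabla\f(T^{-1}y)-\nabla\f(x_0)\big)$.
\end{itemize}

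\emph{Lusin's $(N)$ condition.} This comes from the earlier fact that \eqref{intro:CF:MA} gives Lusin $(N)$ for $\nabla\f$. Concretely, if $|F|=0$ for $F\subset S$, then \eqref{intro:CF:MA} forces $\muf(F)=0$. Since $\nabla\phi$ is an affine image of $\nabla\f\circ T^{-1}$, the condition passes to $\phi$, and $\mu_\phi(T(F)) = t^{-n}|\det A|^{-1}\muf(F)$.

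\emph{Applying Lemma \ref{lemma:abp}.} Let $G:=\{y\in\calo: D^2\phi(y)\ge n^{-2} I\}$ and $E:=T^{-1}(G)$. The lemma gives $\mu_\phi(G)\ge \omega_n (4n)^{-n}$. We also need an upper bound $\mu_\phi(\calo)\le C(n)$. This is the standard fact that the gradient image of a convex function that vanishes on the boundary of a normalized domain and has minimum $-1$ lies in a ball of radius $C(n)$, using interior bounds on compact subsets. The subtlety is near the boundary, where gradients may be large. I would handle this by applying the whole argument to the doubled section: use \eqref{intro:CF:MA} together with the doubling property $\muf(S_\f(x_0,2t))\le C\muf(S_\f(x_0,t))$. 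Alternatively, I would replace $S$ by $S_\f(x_0,t/2)$ inside $S$ and use the standard gradient bound on the half section. Combining the two bounds yields $\muf(E)\ge\kappa_2\muf(S)$.

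\emph{Hessian comparison.} On $E$ we have $D^2\f(x) = t\,A^{T}D^2\phi(Tx)A \ge \frac{t}{n^2}A^TA$. Hence
\[
\|D^2\f\|\ge \frac{t}{n^2}\,\lambda_{\max}(A^TA)=\frac{t}{n^2}\|A\|^2 \quad\text{a.e. on } E.
\]
For the average over $S$, convexity gives $\int_S\|D^2\f\|\le C\int_S\Delta\f$. Here I would use that $\Delta\f$ is only the absolutely continuous part of the distributional Laplacian; the singular part is nonnegative, so this direction of the inequality is safe. Integrating by parts,
\[
\int_S\Delta\f\le \int_{\partial S}\partial_\nu\f = \int_{\partial S}\partial_\nu \delta_\f(x_0,\cdot)\le \mathcal{H}^{n-1}(\partial S)\,\sup_S|\nabla\delta_\f|.
\]
The main obstacle is to make this quantitative, again by working on a smaller section. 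In normalized coordinates one has $|\nabla\phi|\le C(n)$ on $T(S_\f(x_0,t/2))$, and therefore $|\nabla\delta_\f|\le C t\|A\|$ there. Moreover $|S|\approx |\det A|^{-1}$, and the perimeter of the ellipsoid is at most $C|S|\,\|A\|$. Together these give
\[
\frac1{|S|}\int_{S_\f(x_0,t/2)}\|D^2\f\|\le C t\|A\|^2.
\]
To pass from the half section back to $S$ (and to absorb the boundary issue above), I would apply the construction to $S'=S_\f(x_0,t)$ while defining $E$ via the section $S_\f(x_0,t/2)$, with the roles adjusted. I will also use the engulfing and doubling properties that follow from \eqref{intro:CF:MA}, as in Caffarelli--Gutiérrez. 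This chain yields \eqref{d2ecc}.
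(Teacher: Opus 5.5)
Your architecture is the paper's: normalize $S$ by John's lemma, apply Lemma~\ref{lemma:abp} to the shifted normalized function, and transport the measure and Hessian bounds back. Normalizing by the height $t$ instead of by $\lambda_0$ is only the rescaling $\phi_{\rm yours}=(\lambda_0/t)\,\phi_{\rm paper}$.

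The genuine gap is the upper bound in \eqref{d2ecc}, whose left-hand side averages $\|D^2\f\|$ over the \emph{whole} section $S$. Your integration by parts gives $\int_{S'}\Delta\f\le\mathcal{H}^{n-1}(\partial S')\sup_{\partial S'}|\nabla\delta_\f(x_0,\cdot)|$. The gradient bound you feed into it (convexity plus a lower bound on the distance to $\partial\calo$) is quantitative only on subsections such as $S'=S_\f(x_0,t/2)$, and it degenerates at $\partial S$. So what you have actually proved is $\frac{1}{|S|}\int_{S_\f(x_0,t/2)}\|D^2\f\|\le Ct\|A\|^2$.

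Reshuffling which of $S_\f(x_0,t)$ and $S_\f(x_0,t/2)$ plays the outer role does not fix this. To control the average over $S$ itself by your method, you must run the gradient argument on a section containing $S$ well inside it, e.g.\ $S_\f(x_0,2t)$. But $S_\f(x_0,2t)\subset\subset\Omega$ is not a hypothesis; only $S\subset\subset\Omega$ is. At best you obtain the lemma for sections whose double is compactly contained in $\Omega$, which is weaker than the statement. The paper closes exactly this point by quoting \eqref{bounddeltafs}, namely $\int_{S^*}\Delta\f^*\le K_1$ on the full normalized section (Caffarelli--Guti\'errez; Guti\'errez, Lemma 3.2.1; De Philippis--Figalli, Lemma 3.2). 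You need that estimate, or a proof of it, to conclude.

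The same issue affects \eqref{muES}. You need $\mu_\phi(\calo)\le C$. Since $\mu_\phi(\calo)=(\lambda_0/t)^n$, this is precisely $t/\lambda_0\ge\kappa_1$ from \eqref{def:kappa1}, which the paper imports from Forzani--Maldonado.

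Your claim that $|\nabla\phi|\le C(n)$ on $T(S_\f(x_0,t/2))$ is false with a purely dimensional constant. A convex function vanishing on $\partial\calo$ with minimum $-1$ may drop steeply near $\partial\calo$. Then $\{\phi<-1/2\}$ comes arbitrarily close to $\partial\calo$ and carries arbitrarily large gradients. The bound does hold with a geometric constant, but only after invoking the (DC) consequences of \eqref{intro:CF:MA}:
\begin{enumerate}[(i)]
\item doubling in the height, $\muf(S_\f(x_0,t))\le C\,\muf(S_\f(x_0,t/2))$;
\item a lower bound on the normalized distance from $S_\f(x_0,t/2)$ to $\partial S$.
\end{enumerate}
With those cited, this half of your argument goes through. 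Your doubled-section alternative again needs $S_\f(x_0,2t)\subset\subset\Omega$.

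A minor point: since $\f\in W^{2,1}_{\rm loc}$, the distributional Laplacian has no singular part, so that caveat is vacuous. Once \eqref{bounddeltafs} and \eqref{def:kappa1} are in hand, your proof coincides with the paper's.
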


\begin{remark}
Recall that, by Aleksandrov's theorem (see for instance \cite[Section 6.4]{EvansGariepy}), the convexity of $\f$ implies the existence of $D^2 \f(x)$ as a nonnegative, symmetric matrix for a.e.\  $x \in \Omega$ and if $\lambda_1(x) \leq \cdots \leq \lambda_n(x)$ denote its eigenvalues, then $\| D^2 \f(x) \| = \lambda_n(x)$ and $\frac{1}{n} \Delta \f(x) \leq \| D^2 \f(x) \| \leq \Delta \f(x)$ for a.e.\  $x \in \Omega$. 
\end{remark}

\section{Preliminaries}\label{sec:preliminaries}

\subsection{Regularity conditions on Borel measures with respect to Monge-Amp\`ere sections} Let $\f \in C^1(\Omega)$ be a strictly convex function. Following \cite{caffaguti1, caffaguti2, gutihuang} we write $\muf \in \dcc_\f$ if there exist constants $C_{DC} \geq 1$ and $\alpha \in (0,1)$ such that for every section $S_\f(x, t) \subset \subset \Omega$ we have
\begin{equation}\label{dc}
\muf(S_\f(x, t)) \leq C_{DC} \, \muf(\alpha S_\f(x,t)), 
\end{equation}
where $\alpha S_\f(x,t)$ is the $\alpha$-contraction of $S_\f(x,t)$ with respect to its center of mass. In \cite[p.426]{caffaguti2}, L. Caffarelli and C. Guti\'errez proved if $\muf$ satisfies \eqref{intro:CF:MA} and if $\delta_2 \in (0, 1)$ is a number chosen so that $\delta_1:=C_\theta \delta_2^\theta < 1$, then $\muf \in \dcc_\f$ with $C:=1/(1-\delta_1)$ and any $\alpha \in (0,1)$ with $1 - \alpha^n < \delta_2$.

\begin{dfn}[Geometric constants] Under the condition $\muf \in \dcc_\f$ all constants   depending only on $C_{DC}$ and $\alpha$ from \eqref{dc} as well as on $n$ will be called \emph{geometric constants}. In particular, if $\muf $ satisfies \eqref{intro:CF:MA}, geometric constants depend only on the constants $C_\theta$ and $\theta$ from \eqref{intro:CF:MA} and $n$.
\end{dfn}

\subsection{The normal mapping of a continuous function and a maximum principle} Let us record here some basic results on the normal mapping of continuous functions and then conclude with a simple maximum principle that follows from \cite[Chapter 6, Section 1]{ChenWu}, or the proof of Lemma 9.2 in \cite[p.221]{gt}, or \cite[Chapter 1, Sections 1--4]{guti}.

Fix an open bounded connected set $\mathcal{O} \subset \rn$ and $h \in C(\mathcal{O})$. \emph{The normal mapping} $\chi : \calo \to \mathcal{P}(\rn)$ of $h$ is the set-valued mapping defined for $y \in \mathcal{O}$ as
$$
\chi(y):=\{p \in \rn: h(x) \leq h(y) + \langle p, x -y \rangle, \: \forall x \in \calo\}
$$
and \emph{the contact set} of $h$ in $\calo$ is defined as
\begin{align*}
\Gamma_h(\mathcal{O}):&=\{y \in \mathcal{O}: \chi(y) \neq \emptyset\}\\
& = \{y \in \mathcal{O}: \text{there exists } p \in \rn \text{ with } h(x) \leq h(y) + \langle p, x-y \rangle \, \forall x \in \mathcal{O}\}.
\end{align*}
Given $y \in \Gamma_h(\mathcal{O})$, the function $x \mapsto h(y) + \langle p, x-y \rangle - h(x)$, for $x \in \calo$, is minimized at $y$. Hence, if $h \in C^1(\calo)$, it follows that $\chi(y) = \{\nabla h(y)\}$ for every $y \in \Gamma_h(\mathcal{O})$ and then 
\begin{equation}\label{Gamma:h:C1}
\Gamma_h(\mathcal{O})=\{y \in \mathcal{O}: h(x) \leq h(y) + \langle \nabla h(y), x-y \rangle \, \forall x \in \mathcal{O}\}.
\end{equation}
On the other hand, by Lemma 1.1 from \cite[p.80]{ChenWu}, if $h \in W^{2,1}_{\rm loc}(\calo) \cap C(\mathcal{O})$, we have
\begin{equation}\label{D2h<0:Gamma}
D^2 h(y) \leq 0 \quad \text{a.e. } y \in \Gamma_h(\mathcal{O}).
\end{equation}
Finally, as a consequence of Lemmas 1.3 and 1.5 from \cite[pp.82--84]{ChenWu}, we obtain the following version of Aleksandrov's maximum principle. 

\begin{thm}\label{thm:max:Aleksandrov} Given an open set $\mathcal{O} \subset \rn$ and $h \in C(\overline{\mathcal{O} })$ satisfying $h \leq 0$ on $\partial \mathcal{O}$ and $h(\xi_0) >0$ for some $\xi_0 \in \mathcal{O}$, we have
\begin{equation}\label{max:Aleksandrov}
h(\xi_0)^n \leq \omega_n^{-1} \diam{\mathcal{O}}^n |\chi(\Gamma^+_h(\mathcal{O}))|,
\end{equation}
where 
$$
\Gamma^+_h(\mathcal{O}):=\Gamma_h(\mathcal{O}) \cap \{y \in \mathcal{O}: h(y) \geq 0\}.
$$
\end{thm}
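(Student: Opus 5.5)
The plan is to prove \eqref{max:Aleksandrov} directly, by showing that the normal mapping of $h$ over $\Gamma^+_h(\mathcal{O})$ contains a whole Euclidean ball of gradients. I do not expect to need the finer lemmas of \cite{ChenWu}. Set $M:=h(\xi_0)>0$ and $d:=\diam{\mathcal{O}}$. If $d=\infty$ there is nothing to prove, so assume $\mathcal{O}$ is bounded, which makes $\overline{\mathcal{O}}$ compact. The claim I aim for is
$$
B(0, M/d) \subset \chi(\Gamma^+_h(\mathcal{O})).
$$
Once this holds, taking Lebesgue measure gives $\omega_n (M/d)^n \leq |\chi(\Gamma^+_h(\mathcal{O}))|$, which is exactly \eqref{max:Aleksandrov}. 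If measurability of $\chi(\Gamma^+_h(\mathcal{O}))$ is an issue, the inequality still holds for outer measure, and the ball inclusion is all that is really used.

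To prove the inclusion, fix $p$ with $|p|<M/d$ and maximize the continuous function $x\mapsto h(x)-\langle p,x\rangle$ over the compact set $\overline{\mathcal{O}}$. Let $y\in\overline{\mathcal{O}}$ be a maximizer. By construction, $h(x)\leq h(y)+\langle p,x-y\rangle$ for all $x\in\overline{\mathcal{O}}$, and in particular for all $x\in\mathcal{O}$. Hence $p\in\chi(y)$, provided $y$ is an interior point.

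The key step is therefore to rule out $y\in\partial\mathcal{O}$ and to check that $h(y)\geq 0$. For $x\in\partial\mathcal{O}$ we have $h(x)\leq 0$, so
$$
\bigl(h(\xi_0)-\langle p,\xi_0\rangle\bigr)-\bigl(h(x)-\langle p,x\rangle\bigr)\;\geq\; M+\langle p,x-\xi_0\rangle \;\geq\; M-|p|\,d \;>\;0 .
$$
Thus no boundary point can be a maximizer, and $y\in\mathcal{O}$. For the sign condition, maximality at $y$ gives $h(y)\geq h(\xi_0)+\langle p,y-\xi_0\rangle\geq M-|p|d>0$, so $y\in\Gamma_h(\mathcal{O})\cap\{h\geq 0\}=\Gamma^+_h(\mathcal{O})$ and $p\in\chi(y)\subset\chi(\Gamma^+_h(\mathcal{O}))$.

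The only delicate points are the use of compactness, which requires boundedness and is handled by the trivial case $d=\infty$, and the strict inequality $|p|<M/d$ that keeps the maximizer away from $\partial\mathcal{O}$. Both are routine, so I expect no real obstacle.
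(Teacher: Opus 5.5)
Your proof is correct and is the standard sliding-hyperplane argument from the proof of Lemma 9.2 in Gilbarg--Trudinger, which the paper cites together with Chen--Wu instead of giving a proof. For $|p|<h(\xi_0)/\diam{\mathcal{O}}$, the maximizer of $h-\langle p,\cdot\rangle$ on $\overline{\mathcal{O}}$ lies in $\Gamma^+_h(\mathcal{O})$, so $B(0,h(\xi_0)/\diam{\mathcal{O}})\subset\chi(\Gamma^+_h(\mathcal{O}))$.

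One thing to fix is the remark that ``if $d=\infty$ there is nothing to prove''. Boundedness has to come from the paper's standing assumption, made just before the definition of the normal mapping, that $\mathcal{O}$ is bounded. Without it the statement can fail: for $\mathcal{O}=\rn$ and $h\equiv 1$ one gets $\chi(\Gamma^+_h(\mathcal{O}))=\{0\}$, so the right-hand side is $\infty\cdot 0$ and the inequality is false under the usual convention $\infty\cdot 0=0$.
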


\begin{remark}\label{rmk:thm:B} Notice that if the assumption $h \in C(\overline{\mathcal{O} })$ in Theorem \ref{thm:max:Aleksandrov} is replaced with $h \in C^1(\overline{\mathcal{O} })$ (so that $\chi(y) = \{\nabla h(y)\}$ for every $y \in \Gamma_h(\mathcal{O})$),  the inequality \eqref{max:Aleksandrov} can be written as
\begin{equation*}
h(\xi_0)^n \leq \omega_n^{-1} \diam{\mathcal{O}}^n |\nabla h(\Gamma^+_h(\mathcal{O}))|.
\end{equation*}
\end{remark}

\subsection{Lusin's $(N)$ property and the area formula}\label{sec:Lusin} Given an open set $\calo \subset \rn$ and a measurable subset $\Gamma \subset \calo$, a measurable mapping $F : \Gamma \to \rn$ is said to satisfy \emph{Lusin's $(N)$ condition on $\Gamma$} if $|F(E)|= 0$ for every set $E \subset \Gamma$ with $|E|=0$.  

We will be interested in mappings in the Sobolev class $W^{1,1}_{\rm loc}(\calo, \rn)$ defined as
$$
W^{1,1}_{\rm loc}(\calo, \rn) :=\{F=(F_1, \ldots, F_n) : \calo \to \rn: F_j \in W^{1,1}_{\rm loc}(\calo), \, j=1, \dots, n\}.
$$
Theorem 1 and Propositions 1 and 2 from \cite[Section 2]{GMS92} (see also \cite[Proposition 1.1]{Maly94}) give

\begin{thm}\label{thm:Lusin} Fix an open set $\calo \subset \rn$ and a measurable subset $\Gamma \subset \calo$. Then, for $F \in W^{1,1}_{\rm loc}(\calo, \rn)$ the following are equivalent:
\begin{enumerate}[(a)]
\item $|F(E)|= 0$ for every set $E \subset \Gamma$ with $|E|=0$ (i.e., Lusin's condition $(N)$  on $\Gamma$).
\item For every measurable subset $\Gamma' \subset \Gamma$ the area formula
\begin{equation}\label{area:form:v1}
\int_{\Gamma'} |\det D F(x)| \dx = \int_{\rn} \N(y, F, \Gamma') \dy
\end{equation}
holds true, where 
$
\N(y, F, \Gamma'):= \# \{x \in \Gamma': F(x) =y\}.
$
\item The change-of-variable formula holds for $F$ on $S$, that is,
$$
\int_\Gamma u(F(x)) |\det D F(x)| \dx =  \int_{\rn} u(y) \N(y, F, \Gamma) \dy,
$$
for every nonnegative Borel measurable function $u$ defined on $\rn$. 
\end{enumerate}
\end{thm}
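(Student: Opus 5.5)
The plan is to reduce everything to the classical area formula for Lipschitz maps (Federer), using the structure of Sobolev maps. The key structural fact I would invoke is that a map $F \in W^{1,1}_{\rm loc}(\calo,\rn)$ is approximately differentiable a.e. Moreover, its approximate differential coincides a.e. with the weak differential $DF$. Consequently, there exist a null set $Z \subset \calo$ and pairwise disjoint measurable sets $\Gamma_k$, $k \in \na$, with $\calo = Z \cup \bigcup_k \Gamma_k$, such that each restriction $F|_{\Gamma_k}$ is Lipschitz (Federer; Acerbi--Fusco/Liu).

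On each $\Gamma_k$ I would extend $F$ to a Lipschitz map on $\rn$ and apply Federer's area formula. This gives, for every measurable $\Gamma' \subset \Gamma$,
\begin{equation*}
\int_{\Gamma'\setminus Z} |\det DF(x)|\dx = \int_{\rn} \N(y,F,\Gamma'\setminus Z)\dy, \qquad |F(E\setminus Z)|=0 \text{ whenever } |E|=0,
\end{equation*}
after summing over $k$ by monotone convergence. Thus the area formula and the $(N)$ property always hold off $Z$. The whole equivalence is then about the behaviour of $F$ on the exceptional null set $\Gamma\cap Z$.

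For (a)$\Rightarrow$(b): since $|F(\Gamma'\cap Z)|=0$, we have $\N(y,F,\Gamma')=\N(y,F,\Gamma'\setminus Z)$ for a.e. $y$. Also $|\Gamma'\cap Z|=0$, so the left-hand side of \eqref{area:form:v1} is unchanged when $\Gamma'$ is replaced by $\Gamma'\setminus Z$, and (b) follows from the display above.

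For (b)$\Rightarrow$(a): apply \eqref{area:form:v1} with $\Gamma'=E$, where $E \subset \Gamma$ and $|E|=0$. The left side vanishes, so $\N(y,F,E)=0$ for a.e. $y$. Since $F(E)=\{y: \N(y,F,E)\geq 1\}$, this gives $|F(E)|=0$.

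For (b)$\Rightarrow$(c): for a Borel set $A$, the set $\Gamma'=\Gamma\cap F^{-1}(A)$ is measurable and satisfies $\N(y,F,\Gamma')=\chi_A(y)\N(y,F,\Gamma)$. Hence (b) yields (c) for $u=\chi_A$. Linearity extends this to simple functions, and monotone convergence extends it to all nonnegative Borel $u$.

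For (c)$\Rightarrow$(a): I would compare (c) with the Lipschitz-part formula. This shows $\int u\,\N(y,F,\Gamma\cap Z)\dy=0$, provided the quantities involved are finite, because $\N(\cdot,F,\Gamma)=\N(\cdot,F,\Gamma\setminus Z)+\N(\cdot,F,\Gamma\cap Z)$. It follows that $|F(\Gamma\cap Z)|=0$. Together with $|F(E\setminus Z)|=0$, this gives (a).

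The main obstacle is the technical bookkeeping rather than any single idea. First, one needs measurability of $y\mapsto \N(y,F,\Gamma')$ and of the images $F(\Gamma')$; I would take this from Federer's theory on the Lipschitz pieces. The delicate point is the image of the exceptional null set, which is exactly what (a) asserts or what must be deduced. Second, in (c)$\Rightarrow$(a) the integrals may be infinite, so the subtraction is not legitimate as it stands. I would try first to localize by choosing
\begin{equation*}
u=\chi_{A_{m,R}}, \qquad A_{m,R} \text{ a Borel subset of } \{y\in B(0,R): \N(y,F,\Gamma\setminus Z)\le m\}
\end{equation*}
of full measure in that set. On such sets the Lipschitz-part side is finite, so the subtraction is legitimate, and letting $m,R\to\infty$ exhausts $\rn$ up to a null set.
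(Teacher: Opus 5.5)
Your implications (a)$\Rightarrow$(b), (b)$\Rightarrow$(a) and (b)$\Rightarrow$(c) are correct. Routing everything through Federer's countable Lipschitz decomposition plus the Lipschitz area formula is a reasonable self-contained alternative to the paper, which gives no proof and simply cites Giaquinta--Modica--Sou\v{c}ek and Mal\'y. The paper only ever uses (a)$\Rightarrow$(b) (in Corollary~\ref{coro:thm:lusin}, the Remark after it, and \eqref{area:form:D2phi:Gamma}), and you handle that direction correctly.

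The gap is in (c)$\Rightarrow$(a), at ``letting $m,R\to\infty$ exhausts $\rn$ up to a null set''. The union of the sets $\{y\in B(0,R):\N(y,F,\Gamma\setminus Z)\le m\}$ is $\{\N(\cdot,F,\Gamma\setminus Z)<\infty\}$. So what you actually prove is $|F(\Gamma\cap Z)\cap\{\N(\cdot,F,\Gamma\setminus Z)<\infty\}|=0$. Nothing makes that set of full measure. For $F\in W^{1,1}_{\rm loc}$ the quantity $\int_{\rn}\N(y,F,\Gamma\setminus Z)\dy=\int_\Gamma|\det DF|\dx$ can be $+\infty$, and the multiplicity can then be infinite on a set of positive measure. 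There both sides of (c) equal $+\infty$ for every $u$ charging that set, so (c) says nothing about $F(\Gamma\cap Z)$ there.

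No choice of $u$ repairs this: with $\Gamma$ fixed and test functions only on the target, (c)$\Rightarrow$(a) is false as stated. Take $n=1$ and $\calo=\Gamma=(0,1)$. Let $F$ be the representative of $\sin(1/x)\in W^{1,1}_{\rm loc}((0,1))$ obtained by redefining it as $2c-1$ on a null Cantor set $C\subset[\frac{1}{4},\frac{3}{4}]$, where $c$ is the Cantor function, so that $F(C)=[-1,1]$. Since $\sin(1/C)$ is null, $\N(y,F,\Gamma\setminus C)=\infty$ for a.e.\ $y\in(-1,1)$, and $F$ takes values in $[-1,1]$. Hence both sides of (c) equal $+\infty$ when $u>0$ on a positive-measure subset of $(-1,1)$, and $0$ otherwise. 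So (c) holds while $|F(C)|=2$; (b) also fails for $\Gamma'=C$. A continuous version exists for $n\ge 2$: take a continuous Sobolev map on a square that violates $(N)$, and add countably many disjoint squares mapped affinely onto a square containing its image.

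So the equivalence needs more than your argument can supply. One option is to assume $\N(\cdot,F,\Gamma\setminus Z)<\infty$ a.e., e.g.\ $|\det DF|\in L^1(\Gamma)$; then your proof is complete, and $u\equiv 1$ already suffices when the integral is finite. The other is to strengthen (c) to hold for every measurable $\Gamma'\subset\Gamma$ (or with test functions $v(x)$ on the domain); then $u\equiv 1$ gives (b) at once.
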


Let us now record the following consequence of Theorem \ref{thm:Lusin}.

\begin{coro}\label{coro:thm:lusin} Fix an open set $\calo \subset \rn$ and a subset $\Gamma \subset \calo$. Let $h \in W^{2,1}(\calo)$ such that $\nabla h: \calo \to \rn$ satisfies Lusin's $(N)$ condition on $\Gamma$, then
$$
|\nabla h(\Gamma)| \leq \int_\Gamma |\det D^2 h(x)| \dx.
$$
\end{coro}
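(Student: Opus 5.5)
The statement to prove is Corollary \ref{coro:thm:lusin}. I will apply Theorem \ref{thm:Lusin} to the mapping $F:=\nabla h$. Since $h \in W^{2,1}(\calo)$, each component $\partial_j h$ lies in $W^{1,1}(\calo) \subset W^{1,1}_{\rm loc}(\calo)$, so $F \in W^{1,1}_{\rm loc}(\calo,\rn)$ and $DF = D^2 h$ a.e. One technicality has to be dealt with first: Theorem \ref{thm:Lusin} is stated for a measurable $\Gamma$, while the corollary allows an arbitrary subset $\Gamma \subset \calo$.

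I reduce to the measurable case as follows. Choose a Borel (e.g.\ $G_\delta$) set $\tilde\Gamma \supset \Gamma$ with $|\tilde\Gamma \setminus \Gamma|=0$. This is not automatic for a nonmeasurable $\Gamma$, where one only has $|\tilde\Gamma|=|\Gamma|^*$. In the applications, $\Gamma$ is a contact set, which is closed relative to $\calo$ and hence measurable, so I will simply assume $\Gamma$ measurable; then $\tilde \Gamma=\Gamma$ works and the integral on the right-hand side makes sense.

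With $\Gamma$ measurable, the hypothesis says that $F$ satisfies condition (a) of Theorem \ref{thm:Lusin} on $\Gamma$. Hence (b) holds with $\Gamma'=\Gamma$:
$$
\int_\Gamma |\det D^2 h(x)|\dx = \int_{\rn} \N(y,\nabla h,\Gamma)\dy .
$$
For every $y \in \nabla h(\Gamma)$ we have $\N(y,\nabla h,\Gamma) \geq 1$, and $\N \geq 0$ everywhere, so $\N(y,\nabla h,\Gamma) \geq \mathbf{1}_{\nabla h(\Gamma)}(y)$. Integrating gives
$$
|\nabla h(\Gamma)| \leq \int_{\rn} \N(y,\nabla h,\Gamma)\dy = \int_\Gamma |\det D^2 h(x)|\dx,
$$
which is the claim.

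The main obstacle is measurability of $\nabla h(\Gamma)$, which is needed for $|\nabla h(\Gamma)|$ to mean Lebesgue measure rather than outer measure. I handle it as follows:
\begin{enumerate}[(1)]
\item If $\nabla h$ is continuous, which holds in the applications since $h\in C^1$, and $\Gamma$ is relatively closed, then $\Gamma$ is $\sigma$-compact. Its image $\nabla h(\Gamma)$ is then $\sigma$-compact, hence Borel.
\item In general, write $\Gamma = K \cup Z$ with $K$ a countable union of compact sets on which $\nabla h$ is continuous (by Lusin's theorem) and $|Z|=0$. Then $\nabla h(K)$ is $\sigma$-compact and $|\nabla h(Z)|=0$ by condition $(N)$, so $\nabla h(\Gamma)$ is measurable.
\end{enumerate}
Measurability of $\N(\cdot,\nabla h,\Gamma)$ itself is already part of the statement of Theorem \ref{thm:Lusin}(b).
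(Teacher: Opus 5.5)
Your argument is correct and is essentially the paper's proof: apply the area formula of Theorem~\ref{thm:Lusin}(b) to $F=\nabla h$ with $\Gamma'=\Gamma$, then integrate the pointwise bound $\mathbf{1}_{\nabla h(\Gamma)}\leq \N(\cdot,\nabla h,\Gamma)$. Your extra measurability remarks are also correct and make explicit what the paper leaves implicit: you restrict to measurable $\Gamma$, which covers the relatively closed contact sets used later, and you show $\nabla h(\Gamma)$ is measurable via $\sigma$-compactness, or via Lusin's theorem together with condition $(N)$.
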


\begin{proof}  By using Theorem \ref{thm:Lusin} with $F:=\nabla h \in W^{1,1}_{\rm loc}(\calo, \rn)$ and by noticing that 
$$
\mathbf{1}_{\nabla h(\Gamma)}(y) \leq \N(y, \nabla h, \Gamma) \quad \forall y \in \rn
$$
and that $D F = D^2 h$, the equality \eqref{area:form:v1} gives 
$$
|\nabla h(\Gamma)|  = \int_{\rn} \mathbf{1}_{\nabla h(\Gamma)}(y) \dy \leq \int_{\rn} \N(y, \nabla h, \Gamma) \dy= \int_{\Gamma} |\det D^2 h(x)| \dx. 
$$
\end{proof}

\begin{remark} If $\f \in C^1(\Omega)$ is a strictly convex function verifying \eqref{intro:CF:MA}, it follows that $\nabla \f : \Omega \to \rn$ satisfies Lusin's $(N)$ condition on $\Omega$. In particular, the Monge-Amp\`ere measure associated to $\f$ has $\det D^2 \f$ as its density, that is, by Theorem \ref{thm:Lusin}, we have
$$
\muf(F) = \int_F \det D^2 \f(x) \dx \quad \text{for every Borel set } F \subset \Omega,
$$
because $\N(y, \nabla \f, F):= \# \{x \in F: \nabla \f(x) =y\} = \mathbf{1}_{\nabla \f(F)}(y)$ for every $y \in \rn$, since the strict convexity of $\f$ makes $\nabla \f$ a one-to-one mapping. 
\end{remark}

\subsection{The Caffarelli-Guti\'errez normalization technique}\label{sec:normalization} Here we revisit the Caffarelli-Guti\'errez normalization technique from \cite[Section 1]{caffaguti2} and \cite[Section 3.2]{guti}. 

Given $x_C \in \rn$ and a symmetric, positive-definite $n \times n$ matrix $\Sigma$, set  $A:=\Sigma^{-1/2}$ and define the open, bounded, convex 
$$
E(x_C, A):= \{x \in \rn: |A^{-1}(x - x_C)| < 1\} = \{x \in \rn: \langle \Sigma (x - x_C), x - x_C \rangle < 1\}.
$$
Given a number $a > 0$, the dilation $a E(x_C, A)$ is defined as the ellipsoid $E(x_C, a A)$.

Let $\f \in C^1(\Omega)$ be a strictly convex function and fix a section $S:=S_\f(x_0, t) \subset \subset \Omega$. By F. John's lemma (see for instance \cite[p.166]{Figalli}), there exists an ellipsoid $E(x_C, A)$ (which is the unique ellipsoid of maximal volume contained in $S$) such that
\begin{equation*}
E(x_C, A) \subset S \subset n E(x_C, A).
\end{equation*}
In particular, the affine transformation $Tx := A^{-1}x - A^{-1} x_C$ satisfies $T(E(x_C, A)) = B(0,1)$ as well as
\begin{equation}\label{norm:T(S):B}
B(0, 1) \subset T(S) \subset B(0, n),
\end{equation}
and we say that \emph{$T$ normalizes $S$} and write $S^*:=T(S)$. Set $M:=A^{-1}$. We will use $\|M\|$ to indicate the operator norm of $M$, that is, $\|M\| = 1/\rho_1$, where $\rho_1>0$ is the smallest eigenvalue of $A$. As mentioned, by abuse of notation we will also write $\|T\|$ for $\|M\|$. Next, define $\lambda_0 >0$ as
\begin{equation}\label{def:lambda0}
\lambda_0 := \left(\frac{\muf(S)}{\det(M)} \right)^\frac{1}{n}
\end{equation}
and let $\f^*$ be the strictly convex, continuously differentiable function defined on $T(\Omega)$ as
$$
\f^*(y):= \frac{1}{\lambda_0} \f(\tin y) \quad \forall y \in T(\Omega),
$$
so that
\begin{equation}\label{grad2f:grad2f*}
\nabla \f^*(y) = \frac{1}{\lambda_0} (M^{-1})^t \nabla \f(\tin y)
\end{equation}
as well as
\begin{equation}\label{D2f:D2f*}
D^2 \f^*(y) = \frac{1}{\lambda_0} (M^{-1})^t D^2\f(\tin y)M^{-1}.
\end{equation}
Notice that given a Borel set $\mathcal{E} \subset \rn$ the definition of Monge-Amp\`ere measure and \eqref{grad2f:grad2f*} imply
\begin{equation}\label{muf*:muf}
\mu_{\f^*}(\mathcal{E})=|\nabla \f^*(\mathcal{E}) | = \frac{1}{\lambda_0^n} \det(M^{-1}) |\nabla \f(\tin(\mathcal{E}))| = \frac{1}{\lambda_0^n \det(M)}  \mu_{\f}(\tin(\mathcal{E})).
\end{equation}
In particular, by taking $\mathcal{E}=S^*$ in \eqref{muf*:muf} the choice of $\lambda_0$ in \eqref{def:lambda0} gives
\begin{equation}\label{muS1}
\mu_{\f^*}(S^*) =1.
\end{equation}
Also, if $y:=Tx$ for a.e.\ $x \in \Omega$ \eqref{D2f:D2f*} yields
\begin{equation}\label{D2f<D2fsT2}
\|D^2\f(x)\| = \lambda_0 \|M^t D^2 \fs(y) M\| \leq \lambda_0 \|T\|^2 \|D^2 \fs(y)\|.
\end{equation}
We also have
$$
\delta_{\f^*}(Tx, Tx') = \frac{1}{\lambda_0} \delta_\f(x, x') \quad \forall x, x' \in \Omega
$$
so that
\begin{equation}\label{S:S*}
S_{\f^*}(Tx, t'/\lambda_0) = T(S_\f(x, t')) \quad \forall x \in \Omega,   t' > 0,
\end{equation}
and the identity \eqref{S:S*} gives 
$$
S^*_\sigma:= S_\fs\left(y_0, \frac{\sigma t}{\lambda_0}\right) = T(S(x_0, \sigma t)) \quad \forall \sigma \in (0, 1].
$$
By \cite[Theorem 8]{forzamaldona} if $\muf \in \dcc_\f$ then there is a geometric constant $\kappa_1 \in (0,1)$ such that
\begin{equation}\label{def:kappa1}
\kappa_1 \leq \frac{t}{\lambda_0} \leq \kappa_1^{-1}.
\end{equation}
If, in addition, $\f \in W^{2,1}_{\rm loc}(\Omega)$, from the first few lines of the proof of Theorem 2 in \cite{caffaguti2}, or by Lemma 3.2.1 in \cite{guti}, or by Lemma 3.2 in \cite{DeFi11}, or by Lemma 2.3 in \cite{MaConvexPDE}, there exists a geometric constant $K_1 > 0$  such that
\begin{equation}\label{bounddeltafs}
\int_{S^*} \Delta \fs(y) \, dy \leq K_1.
\end{equation}

\section{Proofs of Lemmas \ref{lemma:abp} and  \ref{lemma:main}}\label{sec:proof:thm:main}

\subsection*{Proof of Lemma \ref{lemma:abp}.} For $c>0$ as in \eqref{item:def:c}, introduce the paraboloid
$$
q(y):= \frac{c}{2 } \left(\frac{|y|^2}{n^2} - 1\right)  \quad \forall y \in \rn,
$$
and the function $h \in C(\overline{\mathcal{O}}) \cap C^1(\mathcal{O}) \cap W^{2,1}(\mathcal{O})$ as
\begin{equation}\label{def:h(y)}
h(y):= q(y)  - \phi(y) \quad \forall y \in \overline{\mathcal{O}}.
\end{equation}
In particular, the assumption \eqref{item:def:c} implies
$$
h(y^*)=q(y^*) - \phi(y^*) \geq - \frac{c}{2 } + c = \frac{c}{2} > 0
$$
Now, since $\phi$ vanishes on $\partial \calo$ and $q < 0$ on $B(0,n) \supset \calo$ we get $h \leq 0$ on $\partial \calo$ and then the maximum principle from Theorem \ref{thm:max:Aleksandrov}  and Remark \ref{rmk:thm:B} (used with $\xi_0 := y^*$) yield
\begin{equation*}
h(y^*)^n \leq \omega_n^{-1} \diam{\mathcal{O}}^n |\nabla h(\Gamma^+_h(\mathcal{O}))|
\end{equation*}
which, along with the fact that $\calo \subset B(0, n)$, gives
\begin{equation}\label{after:max:Aleksandrov:h:C1}
\left(\frac{c}{2}\right)^n \leq \omega_n^{-1} (2n)^n |\nabla h(\Gamma^+_h(\mathcal{O}))|.
\end{equation}
Next, since $h \in C^1(\mathcal{O})$, by \eqref{Gamma:h:C1} the contact set $\Gamma_h(\mathcal{O})$ is given by 
\begin{equation*}
\Gamma_h(\mathcal{O})=\{y \in \mathcal{O}: h(x) \leq h(y) + \langle \nabla h(y), x-y \rangle \, \forall x \in \mathcal{O}\}
\end{equation*}
so that, from the definition of $h$ in \eqref{def:h(y)}, $y \in \Gamma_h(\mathcal{O})$ if and only if
$$
q(x) - q(y) - \langle \nabla q(y), x- y \rangle \leq \phi(x) - \phi(y) - \langle \nabla \phi(y), x- y \rangle \quad \forall x \in \mathcal{O},
$$
which means precisely $\delta_q(y,x) \leq \delta_\phi(y,x)$ for every $x \in \mathcal{O}$. A quick computation shows that
$$
\delta_q(y,x) = \frac{c}{2n^2} |x-y|^2 \quad \forall x, y \in \rn,
$$
and then we conclude that $y \in \Gamma_h(\mathcal{O})$ if and only if
\begin{equation}\label{dq<dphi}
 \frac{c}{2n^2} |x-y|^2\leq \delta_\phi(y,x) \quad \forall x \in \mathcal{O}.
\end{equation}
By combining \eqref{dq<dphi} with the fact that the definition of $\delta_\f$ from \eqref{def:delta:f} gives
$$
\delta_\phi(y,x) \leq \delta_\phi(y,x)  + \delta_\phi(y,x) = \langle \nabla \f(x) - \nabla \f(y), x - y \rangle  \quad \forall x, y \in \mathcal{O},
$$
we obtain that 
\begin{equation}\label{dxy<nablafxy}
 \frac{c}{2n^2} |x-y|\leq |\nabla \phi(x) - \nabla \phi(y)| \quad \forall x \in \mathcal{O},  \forall y \in \Gamma_h(\mathcal{O}).
\end{equation}
The novelty of this proof consists in showing the fact that $\nabla h : \mathcal{O} \to \rn$ satisfies Lusin's $(N)$ condition on $\Gamma_h(\mathcal{O})$. That is, given a subset $Z \subset \Gamma_h(\mathcal{O})$ with $|Z| = 0$ we will show that for every $\eps > 0$ there exists a family of Euclidean balls $\{B_j\}_{j \in \na}$ with 
\begin{equation}\label{h(Z):measure:0}
\nabla h(Z) \subset \bigcup\limits_{j \in \na} B_j \quad \text{and} \quad \sum\limits_{j \in \na} |B_j| < \eps. 
\end{equation}
Hypothesis \eqref{item:nabla:phi:N}, along with the fact that $|Z| = 0$, implies  $|\nabla \phi(Z)| =0$. Therefore, given $\eps > 0$ there exists a family of Euclidean balls $\{B(y_j, \rho_j)\}_{j \in \na}$ with 

\begin{equation}\label{phi(Z):UBj}
\nabla \phi(Z) \subset \bigcup\limits_{j \in \na} B(y_j, \rho_j)
\end{equation}
and
\begin{equation}\label{sum:|Bj|:eps:6m}
\sum\limits_{j \in \na} |B(y_j, \rho_j)| < \frac{\eps}{6^n}.
\end{equation}
Notice that given $j \in \na$ we can assume that $\nabla \phi(Z) \cap B(y_j, \rho_j) \neq \emptyset$; otherwise the ball $B(y_j, \rho_j)$ does not participate in the union in \eqref{phi(Z):UBj}. Hence, we must have $\nabla \phi(z_j) \in B(y_j, \rho_j)$ for some $z_j \in Z$, which along with the inclusion \eqref{phi(Z):UBj} yields 
$$
\nabla \phi(Z) \subset \bigcup\limits_{j \in \na} B(\nabla \phi(z_j), 2 \rho_j).
$$
Now,  for $j \in \na$ set $w_j:=\frac{c}{n^2} z_j - \nabla \phi(z_j)$ and let us see that
\begin{equation}\label{nabla:h:(Z):wj}
\nabla h(Z) \subset \bigcup\limits_{j \in \na} B(w_j, 6 \rho_j).
\end{equation}
Indeed, for $z \in Z$ there exists $j_z \in \na$ such that $\nabla \f(z) \in B(\nabla \phi(z_{j_z}), 2 \rho_{j_z})$ and then
\begin{align*}
|\nabla h(z)  - w_{j_z}|& = |\frac{c}{n^2} z - \nabla \phi(z) - w_{j_z}| \leq \frac{c}{n^2} |z - z_{j_z}|+ |\nabla \phi(z) - \nabla \phi(z_{j_z})|\\
& \leq 3 |\nabla \phi(z) - \nabla \phi(z_{j_z})| < 6 \rho_{j_z},
\end{align*}
 where for the second inequality we used \eqref{dxy<nablafxy} (with $y:=z_{j_z}$ and $x:=z$). Then, the inclusion \eqref{nabla:h:(Z):wj} is proved and by setting $B_j:=B(w_j, 6 \rho_j)$ for $j \in \na$, \eqref{h(Z):measure:0} follows from \eqref{nabla:h:(Z):wj} and \eqref{sum:|Bj|:eps:6m}. Consequently, $\nabla h : \mathcal{O} \to \rn$ possesses Lusin's $(N)$ property on $\Gamma_h(\mathcal{O})$, as claimed. Therefore, by Corollary \ref{coro:thm:lusin} applied with $\Gamma := \Gamma_h(\mathcal{O})$ we obtain
\begin{equation}\label{nabla:h:<:det:D2h:Gamma}
|\nabla h(\Gamma_h(\mathcal{O}))| \leq \int_{\Gamma_h(\mathcal{O})} |\det D^2 h(y)| \dy.
\end{equation}
Now, by \eqref{D2h<0:Gamma} we have $D^2 h(y) \leq 0$ for a.e.\  $y \in \Gamma_h(\mathcal{O})$. If $\lambda_1(y) \leq \cdots \lambda_n(y)$ denote the eigenvalues of $D^2\phi(y)$, then the eigenvalues of $D^2 h(y) = \frac{c}{n^2}I - D^2\phi(y)$ are of the form
$\frac{c}{n^2} - \lambda_j(y)$, $j=1, \ldots, n$, and the condition  $D^2 h(y) \leq 0$  means 
\begin{equation}\label{D2phi>cn2:in:Gamma:h}
\frac{c}{n^2}I \leq  D^2\phi(y) \quad \text{a.e. } y \in \Gamma_h(\mathcal{O}),
\end{equation}
that is, $\frac{c}{n^2} \leq \lambda_j(y)$ for  $j=1, \ldots, n.$ Thus, for a.e.\  $y \in \Gamma_h(\mathcal{O})$ we have
\begin{align*}
|\det D^2 h(y)| = \prod\limits_{j=1}^n \left|\frac{c}{n^2} - \lambda_j(y)\right| = \prod\limits_{j=1}^n \left(\lambda_j(y) - \frac{c}{n^2}\right) \leq \prod\limits_{j=1}^n \lambda_j(y) = \det D^2 \phi(y),
\end{align*}
which together with \eqref{nabla:h:<:det:D2h:Gamma} implies
\begin{equation}\label{nabla:h:<:det:D2phi:Gamma}
|\nabla h(\Gamma_h(\mathcal{O}))| \leq \int_{\Gamma_h(\mathcal{O})} |\det D^2 \phi(y)| \dy.
\end{equation}
Next, the strict convexity of $\phi$ makes $\nabla \phi : \calo \to \rn$ a 1-to-1 mapping; in particular, we have
$$
\mathbf{1}_{\nabla \phi(\Gamma_h(\mathcal{O}))}(y) = \N(y, \nabla \phi, \Gamma_h(\mathcal{O})) \quad \forall y \in \rn
$$
which, along with the fact that $\nabla \phi$ satisfies Lusin's $(N)$ condition in $\calo$ and with \eqref{area:form:v1} from Theorem \ref{thm:Lusin} (used with $F=\nabla \phi$, $\Gamma=\calo$, and $\Gamma'=\Gamma_h(\mathcal{O})$), yields
\begin{equation}\label{area:form:D2phi:Gamma}
\int_{\Gamma_h(\mathcal{O})} |\det D^2 \phi(y)| \dy = \mu_\phi(\Gamma_h(\mathcal{O})).
\end{equation}
Finally, \eqref{mu:phi:D2phi>c} follows from \eqref{after:max:Aleksandrov:h:C1}, \eqref{nabla:h:<:det:D2h:Gamma}, \eqref{nabla:h:<:det:D2phi:Gamma}, \eqref{area:form:D2phi:Gamma}, and \eqref{D2phi>cn2:in:Gamma:h}. \qed

\subsection*{Proof of Lemma \ref{lemma:main}} Let us keep with the notation from Section \ref{sec:normalization} and set $S^*:=T(S)$. By \eqref{D2f<D2fsT2}  we can write
\begin{align}\nonumber
\frac{1}{|S|}\int_S \normn{D^2 \f(x)} \, dx \leq \frac{\lambda_0 \|T\|^{2}}{|S|} \int_S \Delta \fs(Tx)\, dx & \leq \frac{\lambda_0 \|T\|^{2}}{|S| |T|} \int_{S^*} \Delta \fs(y)\, dy\\\label{upperbound}
&\leq  \frac{K_1 \lambda_0 \|T\|^{2}}{\omega_n},
\end{align}
where for the last inequality we used that $|S| |T| \geq |B(0,1)| =:\omega_n$ (which follows from the first inclusion in \eqref{norm:T(S):B}) and \eqref{bounddeltafs}. Next, set $y_0:=T(x_0)$ and for $y \in T(\Omega)$ define
$$
\phi(y):= \fs(y) - \fs(y_0) - \langle \nabla \fs(y_0), y - y_0 \rangle  - \frac{t}{\lambda_0}
$$
which makes $\nabla \phi(y) = \nabla \fs(y) - \nabla \fs(y_0)$ for every $y \in T(\Omega)$ and consequently $\mu_\phi = \mu_{\fs}.$ Also, since $S^*:=T(S)$, the identity \eqref{S:S*} yields $S^*= S_{\fs}(y_0, t/\lambda_0)$ and then
$$
\phi =0 \text { on } \partial S^* \quad \text{ and } \quad \min\limits_{S^*} \phi =\phi(y_0)= -\frac{t}{\lambda_0} \leq - \kappa_1.
$$
with $\kappa_1 \in (0,1)$ being the geometric constant from \eqref{def:kappa1}. 

Then, by Lemma \ref{lemma:abp} applied to $\phi$ (with $\calo=S^*$, $y^*=y_0$, and $c=\kappa_1$) we obtain
\begin{equation}\label{muE>kappa2}
\kappa_2 \leq \mu_{\phi}(E^*),
\end{equation}
where $\kappa_2 :=\omega_n \left(\frac{\kappa_1}{4n}\right)^n >0$ (a geometric constant) and
\begin{equation}\label{def:E*}
E^*:= \left\{y \in S^* : D^2 \phi(y) \geq \frac{\kappa_1}{n^2} I\right\}.
\end{equation}
Letting $E:=T^{-1}(E^*) \subset S$, the inequality \eqref{muE>kappa2} and the identities \eqref{muS1} and \eqref{muf*:muf}  yield
\begin{equation*}
\kappa_2 \leq \mu_{\phi}(E^*) =  \mu_{\phi^*}(E^*) = \frac{\mu_{\phi^*}(E^*)}{\mu_{\phi^*}(S^*)} =  \frac{\mu_{\f}(E)}{\mu_{\f}(S)}
\end{equation*}
and \eqref{muES} follows.  Next, by \eqref{def:E*}, $y \in E^*$ if and only if
$
D^2 \phi(y) =D^2 \phi^*(y) \geq \frac{\kappa_1}{n^2}I, 
$
which means (think $x = T^{-1}y$ and recall \eqref{D2f:D2f*}) 
$$
D^2 \f(x) = \lambda_0 M^t D^2 \fs(y) M \geq \frac{\kappa_1 \lambda_0}{n^2} M^tM \quad \forall x \in E.
$$
Consequently, 
\begin{equation*}
\|D^2 \f(x)\| \geq \frac{\kappa_1 \lambda_0}{n^2} \|T^tT\| = \frac{\kappa_1 \lambda_0}{n^2} \|T\|^2 \quad \forall x \in E
\end{equation*}
which together with \eqref{upperbound} yields \eqref{d2ecc} with $C_2:= n^2K_1/(\omega_n \kappa_1).$ \qed

\section*{Acknowledgements}  This work has been partially supported by the Simons Foundation's grant MPS-TSM-00007229.

\section*{} Conflict of Interest Statement: The author declares no conflict of interest.

Data Availability Statement: No data were generated or analyzed in this study.

\end{document}